\documentclass[11pt,a4paper]{amsart}
\usepackage{mathrsfs}
\usepackage{syntonly}
\usepackage{amsmath}
\usepackage{amsthm}
\usepackage{amsfonts}
\usepackage{amssymb}
\usepackage{latexsym}
\usepackage{amscd,amssymb,amsopn,amsmath,amsthm,graphics,amsfonts,mathrsfs,accents,enumerate,verbatim,calc}
\usepackage[dvips]{graphicx}
\usepackage[colorlinks=true,linkcolor=red,citecolor=blue]{hyperref}
\usepackage[all]{xy}
\usepackage{enumitem}
\usepackage{url}
\date{}
\pagestyle{plain}
\textheight= 22.5 true cm \textwidth =16 true cm
\hypersetup
{colorlinks=true,linkcolor=black}
\allowdisplaybreaks[4] \footskip=15pt
\renewcommand{\uppercasenonmath}[1]{}

\topmargin=6pt \evensidemargin0pt \oddsidemargin0pt
\numberwithin{equation}{section} \theoremstyle{plain}
\newtheorem{lem}{Lemma}[section]
\newtheorem{cor}[lem]{Corollary}
\newtheorem{prop}[lem]{Proposition}
\newtheorem{thm}[lem]{Theorem}

\newtheorem{Ex}[lem]{Example}
\newtheorem{Quest}[lem]{Question}
\newtheorem{Property}[lem]{Property}
\newtheorem{Properties}[lem]{Properties}
\newtheorem{Subprops}{}[lem]
\newtheorem{Para}[lem]{}

\newtheorem{rem}[lem]{Remark}

\newtheorem*{ack*}{ACKNOWLEDGEMENTS}




\newcommand{\pf}{\noindent\begin {proof}}
\newcommand{\epf}{\end{proof}}

\newcommand{\ra}{\rightarrow}
\newcommand{\Ext}{\mbox{\rm Ext}}
\newcommand{\Hom}{\mbox{\rm Hom}}
\newcommand{\Tor}{\mbox{\rm Tor}}
\newcommand{\im}{\mbox{\rm im}}

\newcommand{\Gpd}{\mbox{\rm Gpd}}

\newcommand{\pd}{\mbox{\rm pd}}

\newcommand{\Add}{\mbox{\rm Add}}

\pagestyle{myheadings}
\markboth{\rightline {\scriptsize}}
         {\leftline{\scriptsize Auslander conditions and tilting-like cotorsion pairs}}

\begin{document}
\begin{center}
{\Large  \bf  Auslander conditions and tilting-like cotorsion pairs}

\vspace{0.5cm} Jian Wang$^{a}$, Yunxia Li$^{a}$, Jinyong Wu$^{b}$ and Jiangsheng Hu$^{c,d}$\footnote{Corresponding author}  \\
$^{a}$College of  Science, Jinling Institute of
Technology, Nanjing 211169, China\\

$^{b}$Department of Mathematics, Wenzhou University, Wenzhou 325035, China\\

$^{c}$School of Mathematics, Hangzhou Normal University, Hangzhou 311121, China\\

$^{d}$School of Mathematics and Physics, Jiangsu University of Technology,\\
 Changzhou 213001, China\\
E-mail: wangjian@jit.edu.cn, liyunxia@jit.edu.cn, jywu@wzu.edu.cn and jiangshenghu@hotmail.com
\end{center}

\bigskip
\centerline { \bf  Abstract}
\leftskip10truemm \rightskip10truemm \noindent
We study homological behavior of modules satisfying the
Auslander condition. Assume that $\mathcal{AC}$ is the class of left $R$-modules satisfying the Auslander condition. It is proved that each cycle of an exact complex with each term in $\mathcal{AC}$ belongs to $\mathcal{AC}$ for any ring $R$. As a consequence, we show that for any left Noetherian ring $R$, $\mathcal{AC}$ is a resolving subcategory of the category of left $R$-modules if and only if $_RR$ satisfies the Auslander condition if and only if each Gorenstein  projective left $R$-module belongs to $\mathcal{AC}$. As an application, we prove that, for an  Artinian algebra $R$ satisfying the Auslander condition, $R$ is Gorenstein if and only if $\mathcal{AC}$ coincides with the class of Gorenstein projective left $R$-modules if and only if (${\mathcal{AC}^{<  \infty}},(\mathcal{AC}^{<  \infty})^\bot$) is a tilting-like cotorsion pair if and only if {\rm(}${\mathcal{AC}^{< \infty}},\mathcal{I}${\rm)} is a tilting-like cotorsion pair, where $\mathcal{AC}^{<\infty}$ is the class of left $R$-modules with finite $\mathcal{AC}$-dimension and $\mathcal{I}$ is the class of injective left $R$-modules. This leads to some criteria for the validity of the Auslander and Reiten conjecture which says that an  Artinian algebra satisfying the Auslander condition is Gorenstein.
\\[2mm]
{\bf Keywords:} Auslander condition; Tilting-like cotorsion pair; Tilting cotorsion pair; Gorenstein ring; Gorenstein projective module. \\
{\bf 2020 Mathematics Subject Classification:} 16E65; 16E10; 18G25; 16G10.

\leftskip0truemm \rightskip0truemm

\section {Introduction}
For a commutative Noetherian ring $R$, Bass proved in \cite{Bass1963} that, it is a Gorenstein ring (that is, the self-injective dimension of $R$ is finite) if and only if the flat dimension of the $i$-th term in a minimal injective coresolution of $R$ as an $R$-module is at most $i-1$ for any $i\geqslant1$. Auslander showed that this condition is left-right symmetric for a left and right Noetherian ring $R$; see Fossum, Griffith and Reiten \cite{FGR}. In this case, $R$ is said to satisfy the \emph{Auslander condition}, which may be regarded as a non-commutative version of commutative Gorenstein rings. Moreover, Auslander and Reiten conjectured in \cite{ARk-G} that an  Artinian algebra satisfying the Auslander condition is Gorenstein. The conjecture is of particular interest due to its strong connection with several long-standing homological conjectures such as the finitistic dimension conjecture, the Nakayama conjecture and the Wakamatsu tilting conjecture (see \cite{ARk-G,Beli and Reiten,Huang Auscondition}).

Cotorsion pairs were invented by Salce \cite{SL} in the category of
abelian groups and have been deeply studied in approximation theory of modules \cite{GT}, especially in the proof of the flat cover conjecture \cite{BEE}. Motivated by the fact that finitistic dimensions of an algebra can alternatively be computed by Gorenstein projective dimension, Moradifar and $\check{\rm{S}}$aroch \cite{MoraSaFin} introduced the notion of tilting-like cotorsion pairs, which is a natural generalization of tilting cotorsion pairs. These cotorsion pairs have been studied in \cite{MoraSaFin,JwandliHuNonGwhen}, especially providing criteria for the validity of the finitistic dimension conjecture and the Wakamatsu tilting conjecture.

The main aim of this paper is to investigate the homological behavior of modules satisfying the
Auslander condition. More precisely, we first characterize when the class $\mathcal{AC}$ of left $R$-modules satisfying the Auslander condition is resolving. As a result, we establish the connection between the Auslander and Reiten conjecture
mentioned above with the cotorsion pair induced by the class $\mathcal{AC}^{<\infty}$ of left $R$-modules with finite $\mathcal{AC}$-dimension.

To state our results precisely, we first introduce some notation and definitions.
Throughout this paper, $R$ is an associative ring with identity and $R$-Mod is the category of left $R$-modules. Recall that a full subcategory $\mathcal{C}$ of $R$-Mod is \emph{resolving} if  it contains all projective modules and is closed under direct summands, extensions and kernels of surjective homomorphisms. The notion of a resolving subcategory was introduced by Auslander and Bridger \cite{auslander:smt} in the study of totally reflexive modules, which are also called modules of Gorenstein dimension zero or finitely generated Gorenstein projective modules.

Let $M$ be a left $R$-module. We use
$$0\to M \to E^0(M) \to E^1(M) \to \cdots \to E^i(M) \to \cdots$$
to denote a minimal injective coresolution of $M$.
Following \cite{Huang Auscondition}, $M$ is
said to satisfy \emph{the Auslander condition} if the flat dimension of $E^i(M)$ is at most $i$ for any $i\geqslant0$. Denote by $\mathcal{AC}$ the class of left $R$-modules satisfying the Auslander condition.

Recall that a ring $R$ is \emph{Gorenstein} \cite{Iwer} if it is left and right Noetherian and $R$ has finite self-injective dimension on either side. Moreover, a Gorenstein ring is called \emph{Auslander-Gorenstein} \cite{BjAusre} if it satisfies the Auslander condition.
A left and right Noetherian ring $R$ is called \emph{Auslander-regular} \cite{BjAusre} if $R$ satisfies the Auslander condition  and each left $R$-module has finite projective dimension.

Our first main result, characterizing when $\mathcal{AC}$ is a resolving subcategory of $R$-Mod, can be stated as follows.

\begin{thm}\label{thm:1.1}
Let $R$ be a left Noetherian ring. Then the following are equivalent:
\begin{enumerate}
\item $\mathcal{AC}$ is a resolving subcategory of $R$-{\rm Mod}.
\item  $_RR$ satisfies the Auslander condition.
 \item  Each Gorenstein  projective left $R$-module satisfies the Auslander condition.
 \item  Each Ding  projective left  $R$-module satisfies the Auslander condition.
 \item  Each Gorenstein flat left $R$-module satisfies the Auslander condition.
 \end{enumerate}
\end{thm}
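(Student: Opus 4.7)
The plan is to establish $(1)\Leftrightarrow(2)$ together with $(2)\Leftrightarrow(3)$, $(2)\Leftrightarrow(4)$, and $(2)\Leftrightarrow(5)$, leaning throughout on the key lemma announced in the abstract: for any ring, every cycle of an exact complex whose terms all lie in $\mathcal{AC}$ again belongs to $\mathcal{AC}$.

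Many directions are immediate. The implication $(1)\Rightarrow(2)$ holds because a resolving subcategory must contain $_RR$, and each of $(3)\Rightarrow(2)$, $(4)\Rightarrow(2)$, $(5)\Rightarrow(2)$ is trivial since $_RR$ is projective and hence Gorenstein projective, Ding projective, and Gorenstein flat. For $(2)\Rightarrow(3)$ and $(2)\Rightarrow(4)$ I would first check that $\mathcal{AC}$ is closed under direct summands (from the additivity $E^i(A\oplus B)\cong E^i(A)\oplus E^i(B)$) and under arbitrary direct sums (over the left Noetherian $R$, direct sums of injectives remain injective by Bass--Papp and essential embeddings survive direct sums, so minimal injective coresolutions commute with direct sums). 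This forces every projective module to lie in $\mathcal{AC}$, and the defining totally acyclic complex of projectives for a Gorenstein projective or Ding projective module then has every term in $\mathcal{AC}$, so the main lemma yields the conclusion. For $(2)\Rightarrow(5)$ I would additionally verify closure of $\mathcal{AC}$ under direct limits: if $F=\varinjlim F_\alpha$ with $F_\alpha\in\mathcal{AC}$, then $\varinjlim E^\bullet(F_\alpha)$ is an injective coresolution of $F$ whose $i$-th term still has flat dimension at most $i$, and the minimal $E^\bullet(F)$ sits inside as a direct summand. Lazard's theorem then places every flat module in $\mathcal{AC}$, and the main lemma applied to a defining exact complex of flats for a Gorenstein flat module completes this case.

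The substantive step is $(2)\Rightarrow(1)$. Closure of $\mathcal{AC}$ under extensions follows from the horseshoe lemma for injective coresolutions: for $0\to A\to B\to C\to 0$ with $A,C\in\mathcal{AC}$ one obtains an injective coresolution $F^\bullet$ of $B$ with $F^i\cong E^i(A)\oplus E^i(C)$, so $\fd(F^i)\leq i$, and since the minimal $E^\bullet(B)$ is a termwise direct summand of $F^\bullet$, $\fd(E^i(B))\leq i$. The crux, and the principal obstacle, is closure under kernels of surjections; I would apply the main lemma in two moves. First, if $M\in\mathcal{AC}$ and $\cdots\to P_1\to P_0\to M\to 0$ is any projective resolution, then the augmented complex is exact with every term in $\mathcal{AC}$, so every cycle, in particular every syzygy $\Omega^n M$, belongs to $\mathcal{AC}$. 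Second, given $0\to A\to B\to C\to 0$ with $B,C\in\mathcal{AC}$, choose a projective surjection $P\twoheadrightarrow C$ and form the pullback $E=B\times_C P$; projectivity of $P$ gives $E\cong A\oplus P$, while the other pullback sequence reads $0\to\Omega C\to A\oplus P\to B\to 0$. The first move puts $\Omega C$ in $\mathcal{AC}$, extension closure then forces $A\oplus P\in\mathcal{AC}$, and summand closure yields $A\in\mathcal{AC}$. Neither the horseshoe lemma alone nor direct-summand manipulations are enough for this step, and the conceptual heart of the argument is precisely the combination of the main lemma (applied to an augmented projective resolution to extract syzygy closure) with the pullback construction; once this is in hand the rest is bookkeeping.
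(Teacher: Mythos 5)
Your argument is correct and leans on the same key lemma as the paper (Proposition \ref{prop: propac201}, that cycles of exact complexes with terms in $\mathcal{AC}$ stay in $\mathcal{AC}$), but it deploys that lemma differently at the one genuinely delicate point, namely closure of $\mathcal{AC}$ under kernels of surjections in $(2)\Rightarrow(1)$. The paper applies the cycle lemma \emph{directly} to the short exact sequence $0\to C_1\to C_2\to C_3\to 0$: it splices a projective resolution of $C_1$ onto the right-unbounded exact complex $C_2\to C_3\oplus C_3\to C_3\oplus C_3\to\cdots$ (with the periodic tail making the complex exact), so that $C_1$ is literally a cycle of an exact complex with all terms in $\mathcal{AC}$. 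You instead use the cycle lemma only to get syzygy closure (applying it to an augmented projective resolution, which is a legitimate instance once zeros are appended on the right), and then run the classical pullback argument $0\to\Omega C\to A\oplus P\to B\to 0$ together with extension and summand closure. Both are valid; the paper's construction is a slicker one-shot application of the cycle lemma, while yours is the standard ``resolving subcategory'' template and makes the logical dependencies (summand closure and extension closure must come first) more explicit. Your extension-closure step is exactly the paper's Lemma \ref{lem:lem1E}(1), and your summand-closure argument via $E^i(A\oplus B)\cong E^i(A)\oplus E^i(B)$ is more elementary than the paper's appeal to \cite[Proposition 1.4]{H}. The only point I would press you on is $(2)\Rightarrow(5)$: your claim that $\varinjlim E^\bullet(F_\alpha)$ is an injective coresolution of $\varinjlim F_\alpha$ presupposes that the minimal coresolutions can be assembled into an actual direct system, which is not automatic since minimal injective coresolutions are not functorial; this can be repaired (over a left Noetherian ring the class $\mathcal{AC}$ is indeed closed under direct limits), but the paper sidesteps the issue entirely by citing \cite[Theorem 4.9]{Huang Auscondition} for the fact that all flat modules lie in $\mathcal{AC}$, and you would do well to either cite that result or supply the construction of the direct system.
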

The proof of Theorem \ref{thm:1.1} relies heavily on the observation that
each cycle of an exact complex with each term in $\mathcal{AC}$ belongs to $\mathcal{AC}$ for any ring $R$; see Proposition \ref{prop: propac201}. As a consequence of Theorem \ref{thm:1.1}, we prove that an Artinian algebra $R$ is Auslander-Gorenstein if and only if $\mathcal{AC}$ coincides with the class of Gorenstein projective left $R$-modules, which gives a criterion for the validity of the Auslander and Reiten conjecture mentioned above; see Corollary \ref{cor: corarg1}.

Recall that a cotorsion pair $(\mathcal{A},\mathcal{B})$ of left $R$-modules is said to be \emph{tilting-like} if $\mathcal{B}=(T\oplus S)^{\perp_{\infty}}$, where $T$ is a tilting left $R$-module and $S$ is a strongly Gorenstein projective left $R$-module; see \cite[Definition 1.19]{MoraSaFin}. Needless to say, for $S=0$ we recover the tilting cotorsion pair induced by $T$.

Let $\mathcal{AC}^{< \infty}$ be the class of left $R$-modules with finite  $\mathcal{AC}$-dimension, i.e., for each $M$ in $\mathcal{AC}^{< \infty}$, there is a non-negative integer $m$  and an exact sequence $0\ra P_m\ra \cdots \ra P_0\ra M\ra 0$ of left $R$-modules with each $P_i$ in $\mathcal{AC}$. Assume that $R$ is a left Noetherian ring such that $_{R}R$ satisfies the Auslander condition. It follows from Theorem \ref{thm:1.1} that $\mathcal{AC}$ is a resolving subcategory of $R$-Mod. In combination with a result due to Huang \cite[Theorem 3.2]{HuangHdr}, we obtain that $\mathcal{AC}^{< \infty}$ is also resolving; see Corollary \ref{cor:corACfres}.

Our next main result characterizes when $\mathcal{AC}^{< \infty}$ can induce tilting or tilting-like cotorsion pairs.
\begin{thm} \label{thm:1.2} Let $R$ be a  left and right Noetherian  ring satisfying the Auslander condition. Denote by $\mathcal{I}$ the class of injective left $R$-modules and by $\mathcal{GI}$ the class of Gorenstein injective left $R$-modules. Then
 \begin{enumerate}
 \item $R$ is a Gorenstein ring if and only if {\rm(}${\mathcal{AC}^{< \infty}},\mathcal{I}${\rm)} is a tilting-like cotorsion pair.

 \item The following conditions are equivalent:
 \begin{enumerate}
 \item $R$ is an Auslander-regular ring.
  \item {\rm(}${\mathcal{AC}^{<  \infty}},\mathcal{I}${\rm)} is a tilting cotorsion pair.
  \item {\rm(}${\mathcal{AC}^{<  \infty}},\mathcal{GI}${\rm)} is a tilting cotorsion pair.
  \item {\rm(}${\mathcal{AC}^{<  \infty}},\mathcal{GI}${\rm)} is a tilting-like cotorsion pair.
 \end{enumerate}

\item If $R$ is an  Artinian algebra, then $R$ is Gorenstein if and only if {\rm(${\mathcal{AC}^{<  \infty}},(\mathcal{AC}^{<  \infty})^\bot$)} is a tilting-like cotorsion pair.

\item If $R$ is an  Artinian algebra, then $R$ is Auslander-regular if and only if {\rm$({\mathcal{AC}^{<  \infty}},(\mathcal{AC}^{<  \infty})^\bot$)} is a tilting cotorsion pair.
 \end{enumerate}
\end{thm}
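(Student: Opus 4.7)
The plan is to treat the four parts in sequence, with a common foundation: under the standing Noetherian plus Auslander-condition hypothesis, $\mathcal{AC}^{<\infty}$ is a resolving subcategory of $R$-Mod by Corollary \ref{cor:corACfres}, and it contains every Gorenstein projective module by Theorem \ref{thm:1.1}. I will repeatedly use that $\mathcal{AC}^{<\infty}$ is closed under extensions, direct summands and kernels of epimorphisms.

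For part (1), the forward direction begins with the observation that in an Auslander-Gorenstein ring every module has finite Gorenstein projective dimension, so iterating Gorenstein projective precovers produces a finite $\mathcal{AC}$-resolution; this yields $\mathcal{AC}^{<\infty} = R$-Mod, and the resulting (trivial) cotorsion pair $(R\text{-Mod}, \mathcal{I})$ is realized as tilting-like by $T = {}_{R}R$ together with a strongly Gorenstein projective $S$ constructed from a special precover so that $S^{\perp_{\infty}} = \mathcal{I}$, using completeness of the Gorenstein projective cotorsion pair. For the reverse direction, the cotorsion-pair equalities force $\mathcal{AC}^{<\infty} = {}^{\perp}\mathcal{I} = R$-Mod; combining the finite projective dimension of the tilting summand with the Auslander condition yields a uniform bound on $\id({}_{R}R)$, hence $R$ is Gorenstein.

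For part (2), the implications (b) $\Rightarrow$ (c) $\Rightarrow$ (d) are formal, using $\mathcal{GI} = \mathcal{I}$ for Auslander-regular $R$ (every module has finite injective dimension) and the fact that any tilting cotorsion pair is tilting-like via $S = 0$. For (a) $\Rightarrow$ (b), finite global dimension together with the Auslander condition produces a tilting module whose right $\perp_{\infty}$-class is exactly $\mathcal{I}$, drawn from the finite injective coresolution of $_{R}R$. The main obstacle is (d) $\Rightarrow$ (a): I would first invoke part (1) to deduce that $R$ is Gorenstein, and then argue that tilting-likeness of $(\mathcal{AC}^{<\infty}, \mathcal{GI})$ imposes a uniform bound on Gorenstein injective dimensions of arbitrary modules, which together with Gorenstein-ness forces finite global dimension.

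For parts (3) and (4), the Artinian algebra hypothesis lets me invoke Corollary \ref{cor: corarg1}: whenever $R$ is Gorenstein, $\mathcal{AC}$ coincides with the class of Gorenstein projective modules, so $\mathcal{AC}^{<\infty}$ coincides with the modules of finite Gorenstein projective dimension. Part (3) then follows from the tilting-like cotorsion-pair framework of \cite{MoraSaFin}: in the Gorenstein case the pair is tilting-like with $T = {}_{R}R$ and a suitable strongly Gorenstein projective $S$, and conversely the tilting-like structure supplies enough finite-projective-dimension objects to force all of $R$-Mod to have finite $\mathcal{AC}$-dimension, hence $R$ is Gorenstein. Part (4) refines (3) by observing that tilting-ness (as opposed to tilting-likeness) forces $S = 0$, reducing the cotorsion pair to a classical tilting one and giving the equivalence with Auslander-regularity via part (2). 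The most delicate step across (3) and (4) will be exhibiting the strongly Gorenstein projective summand $S$ on the nose and verifying the $\Ext^{\geq 1}$-vanishing that underlies tilting-likeness.
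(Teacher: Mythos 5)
Your overall architecture (reduce the Gorenstein case to $\mathcal{AC}^{<\infty}=R\text{-Mod}$ via Corollary \ref{cor:corACfres}, Theorem \ref{thm:1.1} and Corollary \ref{cor: corarg1}, then play the finitistic-dimension machinery against the Auslander condition) matches the paper's, but two of your key steps contain genuine errors. The most concrete one is the forward direction of (1): you propose to realize $(R\text{-Mod},\mathcal{I})$ as tilting-like with $T={}_RR$ and a strongly Gorenstein projective $S$ satisfying $S^{\perp_\infty}=\mathcal{I}$. This is impossible whenever $\id({}_RR)\geqslant 1$. Since $T={}_RR$ contributes nothing ($R^{\perp_\infty}=R\text{-Mod}$), you would need $S^{\perp_\infty}=\mathcal{I}$ exactly; but every (strongly) Gorenstein projective module satisfies $\Ext^{\geqslant 1}_R(S,W)=0$ for every module $W$ of finite projective dimension, so $S^{\perp_\infty}$ always contains all such $W$ --- in particular ${}_RR$ itself, which is not injective unless $R$ is quasi-Frobenius. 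The tilting summand genuinely has to be nontrivial here; the paper sidesteps the construction entirely by quoting \cite[Proposition 4.8 and Theorem 1.1]{JwandliHuNonGwhen} for the complete hereditary cotorsion pair $(R\text{-Mod},\mathcal{I})$, using that over a Gorenstein ring every module has Gorenstein projective dimension at most $n=\id({}_RR)$.

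The second gap is in the converse of (1): your argument only produces $\id({}_RR)<\infty$ (via the finitistic dimension bound and \cite[Corollary 5.3]{Huang I cot}), but for a general two-sided Noetherian ring this does not yet give Gorensteinness --- you also need $\id(R_R)<\infty$, and the Artinian-algebra result \cite[Corollary 5.5(b)]{ARk-G} is not available in parts (1)--(2). The paper closes this by observing that the injective module $\Hom_{\mathbb Z}(R,\mathbb{Q}/\mathbb{Z})$ lies in the kernel of the tilting-like pair, hence has finite projective dimension, which forces $\id(R_R)<\infty$ (Lemma \ref{prop: propGIr1}); your sketch has no counterpart to this step. Related misfires occur downstream: in (2), (d)$\Rightarrow$(a) you ``invoke part (1)'', but part (1) concerns the pair with right-hand class $\mathcal{I}$, and you cannot substitute $\mathcal{GI}$ for $\mathcal{I}$ without already knowing $\mathcal{GI}=\mathcal{I}$; moreover ``a uniform bound on Gorenstein injective dimensions'' is vacuous over a Gorenstein ring and cannot force finite global dimension. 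The working argument is: Gorensteinness gives $\mathcal{AC}^{<\infty}=R\text{-Mod}$, whence $\mathcal{GI}=(R\text{-Mod})^{\perp}=\mathcal{I}$, and a Gorenstein ring with $\mathcal{GI}=\mathcal{I}$ has finite global dimension. Similarly, in (3) the ``if'' direction should run through the finitistic-dimension bound plus \cite[Corollary 5.5(b)]{ARk-G}, not through forcing $\mathcal{AC}^{<\infty}=R\text{-Mod}$ first, and in (4) the passage ``via part (2)'' needs the intermediate identification $(\mathcal{AC}^{<\infty})^{\perp}=\mathcal{I}$, which itself requires Gorensteinness.
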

Remark that (1) and (3) in Theorem \ref{thm:1.2} together provide criteria for the validity of the Auslander and Reiten conjecture mentioned above.

This paper is structured as follows. Section \ref{pre} contains notation and
definitions for use throughout this paper. In Section \ref{proof}, we prove all results mentioned in Introduction. In the course of our proofs, we prove that each cycle of an exact complex with each term in $\mathcal{AC}$ belongs to $\mathcal{AC}$ for any ring $R$; see Proposition \ref{prop: propac201}.

\section {Preliminaries}\label{pre}
In this section we give some notation and definitions which are used in the paper.

\subsection{Notation} For any left
$R$-module $M$, fd$_{R}M$ (resp. pd$_{R}M$) is
the flat (resp. projective) dimension of $M$.

Let $\mathcal{C}$ be a full subcategory of $R$-Mod. Then we have
$\mathcal{C}^\bot=\{M\in R{\text-}{\rm Mod}\ |\ \Ext_{R}^1(C, M)=0\ {\rm for\ all}\ C\in\mathcal{C}\},$
$^\bot\mathcal{C}=\{M\in R{\text-}{\rm Mod}\ |\ \Ext_{R}^1(M, C)=0\ {\rm for\ all}\ C\in\mathcal{C}\},$
$\mathcal{C}^{\bot_{\infty}}=\{M\in R{\text-}{\rm Mod}\ |\ \Ext_{R}^i(C, M)=0\ {\rm for\ all}\ C\in\mathcal{C}\ {\rm and\ all}\ i\geqslant1 \}.$

For any left $R$-module $M$, we use
$0\ra M\ra E^0(M)\ra \cdots \ra E^i(M)\ra \cdots$
to denote a minimal injective coresolution of $M$.

Let $\xymatrix{ \cdots\ar[r]&X_{n+1}\ar[r]&X_{n}\ar[r]^{\partial_{n}}&X_{n-1}\ar[r]&\cdots}$ be a complex of left $R$-modules. We will denote by $\ker(\partial_n)$ the $n$-th cycle.

\subsection{Gorenstein modules} Following \cite{EJ GP,H}, a left $R$-module $G$ is called \emph{Gorenstein projective} if there is an  exact sequence $$\mathbf{P}:\cdots\ra P_1\ra P_0\ra P^0\ra P^1\ra \cdots$$ of projective left $R$-modules such that $G$ $\cong$ $\ker(P^{0}\ra P^1)$ and $\Hom_R(\mathbf{P},Q)$ is exact for every projective left $R$-module $Q$. In this case, the complex $\mathbf{P}$ is also called a totally acyclic complex of projective left $R$-modules. Moreover, a left $R$-module $M$ is called \emph{strongly Gorenstein projective} \cite{DB} if there is a totally acyclic complex  $$\cdots\ra P\xrightarrow{f} P\xrightarrow{f} P\xrightarrow{f} P\xrightarrow{f}\cdots$$ of projective left $R$-modules such that $M$ $\cong$ $\ker(f)$.

A left $R$-module $M$ is called \emph{Ding projective} \cite{DandLiMStronglyG,Gillespie Dp} if there is an exact sequence  $$\mathbf{P}:\cdots\ra P_1\ra P_0\ra P^0\ra P^1\ra \cdots$$ of projective left $R$-modules with $M\cong \textrm{ker}(P^{0}\ra P^{1})$  such that $\Hom_{R}(\mathbf{P},F)$ is exact for every flat left $R$-module $F$.

A left $R$-module $M$ is called \emph{Gorenstein flat} \cite{EJ} if there is an exact sequence  $$\mathbf{F}:\cdots\ra F_1\ra F_0\ra F^0\ra F^1\ra \cdots$$ of flat left $R$-modules with $M\cong \textrm{ker}(F^{0}\ra F^{1})$  such that $E\otimes_{R}\mathbf{F}$ is exact for every injective right $R$-module $E$.

A left $R$-module $M$ is called \emph{Gorenstein injective} \cite{EJ GP} if there exists an exact sequence  $$\mathbf{I}:\cdots\ra I_1\ra I_0\ra I^0\ra I^1\ra \cdots$$ of injective left $R$-modules with $M\cong \textrm{ker}(I^0\ra I^1)$ such that $\Hom_{R}(E,\mathbf{I})$ is exact for every injective left $R$-module $E$.

In the following, we use $\mathcal{GP}$, $\mathcal{G}p$, $\mathcal{GF}$ and $\mathcal{GI}$ to denote the classes of Gorenstein projective, finitely generated Gorenstein projective, Gorenstein flat and Gorenstein injective left $R$-modules, respectively.

\subsection{Cotorsion pairs}
  Let $\mathcal{A}$ and $\mathcal{B}$ be classes in $R$-Mod.
  A pair $(\mathcal{A},\mathcal{B})$ is called a \emph{cotorsion pair} \cite{GT,SL} if $\mathcal{A}$ $=$ $^\bot\mathcal{B}$ and $\mathcal{B}$ $=$ $\mathcal{A}^\bot$. The class $\mathcal{A}\cap \mathcal{B}$ is called the \emph{kernel} of the cotorsion pair $(\mathcal{A},\mathcal{B})$.

   A cotorsion pair ($\mathcal{A}$, $\mathcal{B}$) is said to be
\emph{hereditary} \cite{GT} if $\Ext_{R}^{i}(A,B)$ $=$ $0$ for  all $i$ $\geqslant$ $1$, $A$ $\in$ $\mathcal{A}$ and $B$ $\in$ $\mathcal{B}$. This definition is equivalent to the condition that if whenever $0\ra A_{3}\ra A_{2} \ra A_{1}\ra 0$ is exact with $A_{2}$, $A_{1}$ $\in$ $\mathcal{A}$, then $A_{3}$ is also in $\mathcal{A}$, or equivalently, if whenever $0\ra B_{1}\ra B_{2} \ra B_{3}\ra 0$ is exact with $B_{1}$, $B_{2}$ $\in$ $\mathcal{B}$, then $B_{3}$ is also in $\mathcal{B}$.

 A cotorsion pair $(\mathcal{A},\mathcal{B})$ is \emph{complete} \cite{GT} if, for each left $R$-module $M$, there is an exact sequence $0\ra M\ra B\ra L\ra 0$ with $B$ $\in$ $\mathcal{B}$ and $L$ $\in$ $\mathcal{A}$. This definition is equivalent to the condition that, for each left $R$-module $M$, there is an exact sequence $0\ra D\ra C\ra M\ra 0$ with $C$ $\in$ $\mathcal{A}$ and $D$ $\in$ $\mathcal{B}$.

A cotorsion
pair $(\mathcal{A},\mathcal{B})$  is \emph{generated by a set} \cite{GT} provided that there is a set $\mathcal{S}$ of left $R$-modules such that $\mathcal{S}^\bot=\mathcal{B}$ $($i.e., $(\mathcal{A},\mathcal{B})$ $=$ $(^\bot(\mathcal{S}^\bot), \mathcal{S}^\bot)$$)$. A cotorsion pair generated by a set $\mathcal{S}$ is also called a cotorsion pair\emph{ cogenerated} by $\mathcal{S}$ in some literature. Here, we use the terminology from \cite{GT}.
If ($\mathcal{A},\mathcal{B}$) is a hereditary cotorsion pair generated by a set $\mathcal{S}$, then there is a single module $M:=\bigoplus_{A\in\mathcal{S}}A$ such that $M^{\bot_\infty}=\mathcal{B}$.

A left $R$-module $T$ is \emph{tilting} \cite{LAHFUCol,Colby1995} if $\pd_RT$ $<$ $\infty$, $T^{(\lambda)}$ $\in$ $T^{\bot_\infty}$ for every cardinal $\lambda$ and there is an exact sequence $0\ra R\ra T_{0}\ra \cdots\ra T_{r}\ra 0$ with $T_{i}$ $\in$ $\Add T$ for all $0$ $\leqslant$ $i$ $\leqslant$ $r$, where $r$ is the projective dimension of $T$ and Add$T$ is all direct summands of direct sums of copies of $T$.

Following \cite{GT}, a cotorsion pair $(\mathcal{A},\mathcal{B})$ of left $R$-modules is called a \emph{tilting cotorsion pair} if $\mathcal{B}$ $=$ $T^{\bot_\infty}$, where $T$ is a tilting left $R$-module. Recall from \cite[Definition 1.19]{MoraSaFin} that a cotorsion pair $(\mathcal{A},\mathcal{B})$ of left $R$-modules is said to be \emph{tilting-like} if $\mathcal{B}=(T\oplus S)^{\perp_{\infty}}$, where $T$ is a tilting left $R$-module and $S$ is a strongly Gorenstein projective left $R$-module. It is clear that all tilting cotorsion pairs are tilting-like cotorsion pairs. But the converse is not true in general. For example, the module $M :$= $\mathbb{Z}/2\mathbb{Z}$ over the
quasi-Frobenius ring $\mathbb{Z}/4\mathbb{Z}$ is clearly a non-projective strongly Gorenstein
projective module, whence the hereditary cotorsion pair generated by $M$ is
tilting-like, but it cannot be tilting since $M$ does not have finite projective
dimension.

\section{Proofs of the statements}\label{proof}
In this section, we prove all results mentioned in Introduction. We keep the notation introduced in the previous sections.

\subsection{Proof of Theorem \ref{thm:1.1}}
This subsection is devoted to the proof of Theorem \ref{thm:1.1}. For this purpose, we need some
technical results.

Let $0\ra M\ra E^0(M)\ra \cdots \ra E^i(M)\ra\cdots$ be a minimal injective coresolution of a left $R$-module $M$. If there is an exact sequence $0\ra M\ra E^0\ra\cdots\ra E^i\ra \cdots $ of left $R$-modules with each $E^i$ injective, then one can check that $E^i(M)$ is a direct summand of $E^i$  for all $i$ $\geqslant$ $0$. We will use this fact in the following.

\begin{lem}\label{lem:lem1E}  Let $R$ be a ring and $0\ra M\ra P\ra N\ra 0$ an exact sequence of left $R$-modules. Then the following are true:
 \begin{enumerate}
\item $E^i(P)$ is a direct summand of $E^i(M)\oplus E^i(N)$ for all $i$ $\geqslant$ $0$.
\item $E^i(M)$ is a direct summand of $E^{i-1}(E^0(M) \oplus N)\oplus E^{i}(P)$ for all $i$ $\geqslant$ $1$.
\end{enumerate}
\end{lem}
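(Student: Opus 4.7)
My plan is to deduce both parts by combining the horseshoe lemma for injective coresolutions with the observation stated just above the lemma: any injective coresolution of a module $X$ contains the minimal coresolution of $X$ as a termwise direct summand. Part (1) is then essentially immediate. Applying the horseshoe lemma to $0\to M\to P\to N\to 0$ with the minimal injective coresolutions of $M$ and $N$ produces an injective coresolution of $P$ whose $i$-th term is $E^i(M)\oplus E^i(N)$, and the cited observation forces $E^i(P)$ to be a direct summand of $E^i(M)\oplus E^i(N)$.

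For part (2), my plan is to perform two successive pushouts. First, pushing out $P\leftarrow M\to E^0(M)$ and splitting the resulting row by injectivity of $E^0(M)$ identifies the pushout object with $E^0(M)\oplus N$ and yields the short exact sequence
\[
0\to P\to E^0(M)\oplus N\to K\to 0,\qquad K:=E^0(M)/M.
\]
A second pushout, now of $P\hookrightarrow E^0(M)\oplus N$ along $P\hookrightarrow E^0(P)$, splits by injectivity of $E^0(P)$ and produces
\[
0\to E^0(M)\oplus N\to E^0(P)\oplus K\to K_P\to 0,\qquad K_P:=E^0(P)/P.
\]

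I would then apply part (1) to this last sequence, using the identifications $E^i(K)\cong E^{i+1}(M)$ and $E^i(K_P)\cong E^{i+1}(P)$ for $i\geq 0$ (since $K$ and $K_P$ are the first cosyzygies of $M$ and $P$, respectively), together with the vanishing $E^i(E^0(P))=0$ for $i\geq 1$. For every $i\geq 1$ this yields statement (2) with $j:=i+1\geq 2$ at once, because the left-hand summand collapses to $E^{i+1}(M)$ and the right-hand factors collapse to $E^i(N)\oplus E^{i+1}(P)=E^{j-1}(E^0(M)\oplus N)\oplus E^j(P)$. The one case needing extra care is $j=1$: the $i=0$ output of part (1) says that $E^0(P)\oplus E^1(M)$ is a direct summand of $E^0(M)\oplus E^0(N)\oplus E^1(P)=E^0(E^0(M)\oplus N)\oplus E^1(P)$, from which I would extract $E^1(M)$ itself as a direct summand of the right-hand side. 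The main obstacle I foresee is precisely this small-index bookkeeping: choosing the two pushouts so that the horseshoe output matches the right-hand side of (2) verbatim, and then discarding the spurious $E^0(P)$ summand to finish the $j=1$ case.
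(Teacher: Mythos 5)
Your argument is correct. Part (1) coincides with the paper's proof: the Horseshoe Lemma applied to the minimal injective coresolutions of $M$ and $N$ gives an injective coresolution of $P$ with terms $E^i(M)\oplus E^i(N)$, and the splitting-off property of minimal coresolutions finishes it. For part (2) you and the paper share the first step --- the pushout of $P\leftarrow M\to E^0(M)$, split using injectivity of $E^0(M)$, yielding $0\to P\to E^0(M)\oplus N\to K\to 0$ with $K$ the first cosyzygy of $M$ --- but then diverge. The paper invokes the external comparison result \cite[Theorem 3.4]{Huang Proper} to produce the exact sequences \eqref{3.1} and \eqref{3.2}, deduces that the middle term $C$ there is injective, and reads off that $E^{i-1}(K)$ is a summand of $E^{i-1}(E^0(M)\oplus N)\oplus E^{i}(P)$. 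You instead perform a second pushout along $P\hookrightarrow E^0(P)$ to obtain $0\to E^0(M)\oplus N\to E^0(P)\oplus K\to K_P\to 0$ and feed this sequence back into part (1), using $E^i(K)\cong E^{i+1}(M)$, $E^i(K_P)\cong E^{i+1}(P)$ (minimality of the coresolutions of $M$ and $P$ makes the truncations minimal coresolutions of the cosyzygies) and $E^i(E^0(P))=0$ for $i\geqslant 1$; the $j=1$ case is handled by discarding the spurious $E^0(P)$ summand, which is legitimate since a direct summand of a direct summand is again a direct summand. Your route is more self-contained, replacing the citation to Huang's theorem by a bootstrap through part (1) at the cost of some index bookkeeping, while the paper's route obtains the whole shifted coresolution of $K$ in one stroke.
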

\begin{proof} (1) By the Horseshoe Lemma, we have the following commutative diagram with exact rows and columns
$$\xymatrix{&0\ar[d]&0\ar[d]&0\ar[d]&\\
0\ar[r]&M\ar[r]\ar[d]&P\ar[r]\ar[d]&N\ar[d]\ar[r]&0\\
0\ar[r]&E^0(M)\ar[d]\ar[r]&E^0(M)\oplus E^0(N)\ar[r]\ar[d]&E^0(N)\ar[d]\ar[r]&0\\
0\ar[r]&K\ar[r]\ar[d]&T\ar[r]\ar[d]&S\ar[d]\ar[r]&0\\
 &0&0&0&}$$

Repeating this procedure, we obtain an exact sequence $$0\ra P\ra E^0(M) \oplus E^0(N) \ra \cdots \ra E^i(M)\oplus E^i(N)\ra \cdots $$ of left $R$-modules. Using the exact sequence, one can check that $E^i(P)$ is a direct summand of $E^i(M)\oplus E^i(N)$ for all $i$ $\geqslant$ $0$.

(2) Note that there is an exact sequence $0\ra M\ra E^0(M)\ra K\ra 0$ of left $R$-modules. Hence we have the following commutative diagram with exact rows and columns
$$\xymatrix{&0\ar[d]&0\ar[d]&&\\
0\ar[r]&M\ar[d]\ar[r]&E^0(M)\ar[d]\ar[r]&K\ar@{=}[d]\ar[r]&0\\
0\ar[r]&P\ar[r]\ar[d]&D\ar[r]\ar[d]&K\ar[r]&0\\
&N\ar@{=}[r]\ar[d]&N\ar[d]&&\\
 &0&0&&}$$
Since $D$ $\cong$ $E^0(M) \oplus N$, there is an exact sequence $0\ra P\ra E^0(M) \oplus N\ra K\ra 0$ of left $R$-modules. Applying \cite[Theorem 3.4]{Huang Proper}, one can obtain an exact sequence
 \begin{equation}\label{3.1}0\ra E^0(P)\ra E^0(E^0(M)\oplus N)\oplus E^1(P)\ra C\ra 0
 \end{equation}
 of left $R$-modules, where $C$ comes from the  exact sequence
\begin{equation}\label{3.2}0\ra K\ra C\ra E^1(E^0(M) \oplus N)\oplus E^2(P)\ra \cdots \ra E^i(E^0(M) \oplus N)\oplus E^{i+1}(P)\ra \cdots
\end{equation}
of left $R$-modules.

 By the sequence \eqref{3.1}, $C$ is a direct summand of $E^0(E^0(M)\oplus N)\oplus E^1(P)$, and so $C$ is injective.  Then $E^0(K)$ is a direct summand of $C$. It follows that $E^0(K)$ is a direct summand of $E^0(E^0(M)\oplus N)\oplus E^1(P)$. Since $C$ is injective, it follows from the sequence \eqref{3.2} that $E^{i}(K)$ is a direct summand of $E^{i}(E^0(M) \oplus N)\oplus E^{i+1}(P)$ for all $i$ $\geqslant$ $1$. Thus
$E^{i-1}(K)$ is a direct summand of $E^{i-1}(E^0(M) \oplus N)\oplus E^{i}(P)$ for all $i$ $\geqslant$ $1$. Since $E^{i}(M)$ $\cong$ $E^{i-1}(K)$, $E^i(M)$ is a direct summand of $E^{i-1}(E^0(M) \oplus N)\oplus E^{i}(P)$ for all $i$ $\geqslant$ $1$.
\end{proof}

The following is a key result to prove Theorem \ref{thm:1.1}.

\begin{prop} \label{prop: propac201} Let $R$ be a ring. Then each cycle of an exact complex with each term in $\mathcal{AC}$ belongs to $\mathcal{AC}$.
\end{prop}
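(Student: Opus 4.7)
The plan is to split the given exact complex into short exact sequences and then induct on the index of the injective coresolution. I would write $Z_n = \ker(\partial_n)$ so that the complex breaks into short exact sequences $0 \to Z_n \to X_n \to Z_{n-1} \to 0$ with each $X_n \in \mathcal{AC}$, and the goal becomes to show $\fd_R E^i(Z_n) \leq i$ for every $n$ and every $i \geq 0$. The induction will be on $i$, handling all $n$ simultaneously at each step, since there is no distinguished ``starting'' index along the complex.

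For the base case $i = 0$, I would observe that the composition $Z_n \hookrightarrow X_n \hookrightarrow E^0(X_n)$ is an injection of $Z_n$ into the injective module $E^0(X_n)$, so by the remark preceding Lemma \ref{lem:lem1E}, $E^0(Z_n)$ is a direct summand of $E^0(X_n)$. Since $X_n \in \mathcal{AC}$, $E^0(X_n)$ is flat, and therefore so is $E^0(Z_n)$.

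For the inductive step, I would assume $\fd_R E^{i-1}(Z_m) \leq i-1$ for all $m$, with $i \geq 1$, and apply Lemma \ref{lem:lem1E}(2) to the short exact sequence $0 \to Z_n \to X_n \to Z_{n-1} \to 0$ to conclude that $E^i(Z_n)$ is a direct summand of $E^{i-1}(E^0(Z_n) \oplus Z_{n-1}) \oplus E^i(X_n)$. The flat dimension of $E^i(X_n)$ is at most $i$ by hypothesis. Since $E^0(Z_n)$ is injective, the minimal injective coresolution of $E^0(Z_n) \oplus Z_{n-1}$ decomposes as a direct sum, giving $E^{i-1}(E^0(Z_n) \oplus Z_{n-1}) = E^0(Z_n) \oplus E^0(Z_{n-1})$ when $i = 1$ (flat by the base case) and $E^{i-1}(E^0(Z_n) \oplus Z_{n-1}) = E^{i-1}(Z_{n-1})$ when $i \geq 2$ (of flat dimension $\leq i-1$ by the inductive hypothesis). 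In either case this summand has flat dimension at most $i$, so $\fd_R E^i(Z_n) \leq i$, completing the induction.

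The technical heart of the argument is Lemma \ref{lem:lem1E}(2), which provides the dimension-shifting relation among the injective coresolutions of $Z_n$, $Z_{n-1}$, and $X_n$. The main subtlety will be the case split $i=1$ versus $i \geq 2$ in the inductive step: when $i-1 \geq 1$ the injective summand $E^0(Z_n)$ disappears from the coresolution of $E^0(Z_n) \oplus Z_{n-1}$ and the inductive hypothesis on $Z_{n-1}$ takes over, while at $i = 1$ that summand persists and must be dispatched via the base case. This is precisely why the induction is run on $i$ rather than on $n$, and no finiteness or Noetherian hypothesis on $R$ is required for the argument.
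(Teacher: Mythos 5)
Your proposal is correct and follows essentially the same route as the paper: the same short exact sequences $0\to Z_n\to X_n\to Z_{n-1}\to 0$, the same induction on the coresolution index $i$ run simultaneously over all cycles, and the same appeal to Lemma \ref{lem:lem1E}(2) in the inductive step, with the $i=1$ versus $i\geqslant 2$ case split that the paper handles via the isomorphisms $E^0(E^0(M)\oplus N)\cong E^0(M)\oplus E^0(N)$ and $E^{i-1}(E^0(M)\oplus N)\cong E^{i-1}(N)$ for $i\geqslant 2$. No gaps.
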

\begin{proof} Suppose that there is an exact complex $$\xymatrix{\mathbf{X}:\cdots\ar[r]&P_{n}\ar[r]^{\partial_{n}}&P_{n-1}\ar[r]^{\partial_{n-1}}&P_{n-2}\ar[r]&\cdots}$$  of left $R$-modules with each $P_{i}$ in $\mathcal{AC}$. For any cycle $M$ of the exact complex $\mathbf{X}$, we need to prove that fd$_R E^i(M)$ $\leqslant$ $i$ for all $i$ $\geqslant$ $0$. We shall prove it by induction on $i$.

Let $M$ $\cong$ $\ker(\partial_{n})$ be the $n$-th cycle. Then there is an exact sequence $0\ra M\ra P\ra N\ra 0$ with $P$ $\cong$ $P_{n}$ and $N$ $\cong$ $\ker(\partial_{n-1})$.

For $i=0$,  $E^0(M)$ is a direct summand of $E^0(P)$ by \cite[Proposition 18.12]{AF}. Since $P$ is in ${\mathcal{AC}}$, $E^0(P)$ is flat. Thus $E^0(M)$ is flat, as desired.

For $i$ $\geqslant$ $1$,  assume that  fd$_R E^{i-1}(M) \leqslant i-1$ for any cycle $M$ of the complex $\mathbf{X}$. We will prove that fd$_R E^{i}(M)$ $\leqslant$ $i$. Note that $E^0(E^0(M)\oplus N)$ $\cong$ $E^0(M)\oplus E^0(N)$. One can check that $E^{i-1}(E^0(M) \oplus N)$ $\cong$ $E^{i-1}(N)$ for all $i\geqslant2$. Since $M$ and $N$ are cycles of the exact complex $\mathbf{X}$, fd$_R E^{i-1}(E^0(M) \oplus N)$ $\leqslant$ $i-1$ by the inductive hypothesis. By assumption, $P$ is in ${\mathcal{AC}}$. It follows that  fd$_R E^{i}(P)$ $\leqslant$ $i$. Then fd$_R (E^{i-1}(E^0(M) \oplus N)\oplus E^{i}(P))$ $\leqslant$ $i$. By Lemma \ref{lem:lem1E}, each $E^i(M)$ is a direct summand of $E^{i-1}(E^0(M) \oplus N)\oplus E^{i}(P)$. It follows that fd$_R E^i(M)$ $\leqslant$ $i$. This completes the proof.
\end{proof}

Now we give the proof of Theorem \ref{thm:1.1}.

{\bf Proof of Theorem \ref{thm:1.1}.} (1) $\Rightarrow$ (2), (3) $\Rightarrow$ (2), (4) $\Rightarrow$ (2) and (5) $\Rightarrow$ (2) are clear.

(2) $\Rightarrow$ (1). Since $R$ is left Noetherian and $R$ is in $\mathcal{AC}$ as a left $R$-module, all projective left $R$-modules are in $\mathcal{AC}$ by (2) and \cite[Theorem 4.9]{Huang Auscondition}. Let $0\ra C_1\ra C_2\ra C_3 \ra 0$  be an exact sequence of left $R$-modules. Then there exists an exact sequence $$0\ra C_1\ra C_2\ra C_3\oplus C_3 \ra C_3\oplus C_3\ra \cdots$$ of left $R$-modules.
We choose an exact sequence $$\cdots \ra P_n\ra \cdots\ra P_0\ra C_1\ra 0 $$ of left $R$-modules with each $P_i$ projective. Thus we obtain an exact complex $$\mathbf{X}:\cdots \ra P_n\ra \cdots\ra P_0\ra C_2\ra C_3\oplus C_3\ra  C_3\oplus C_3 \ra \cdots $$ with  $C_1$ a cycle of the exact complex $\mathbf{X}$. If $C_2$ and $C_3$ are in $\mathcal{AC}$, then each term of the exact complex $\mathbf{X}$ belongs to $\mathcal{AC}$ by noting that $\mathcal{AC}$ is closed under finite direct sums and all projective left $R$-modules are in $\mathcal{AC}$. Hence $C_1$ is in $\mathcal{AC}$ by Proposition \ref{prop: propac201}. By Lemma \ref{lem:lem1E}, $\mathcal{AC}$ is closed under extensions. One can check that $\mathcal{AC}$ is closed under direct sums. Then $\mathcal{AC}$ is closed under direct summands by \cite[Proposition 1.4]{H}. So $\mathcal{AC}$ is resolving, as desired.

(1) $\Rightarrow$ (3). Note that each Gorenstein projective left $R$-module is a cycle of an exact complex of projective left $R$-modules. Thus (3) follows from (1) and Proposition \ref{prop: propac201}.

(1) $\Rightarrow$ (4). The proof is similar to that (1) $\Rightarrow$ (3).

(2) $\Rightarrow$ (5). Note that all flat left $R$-modules are in $\mathcal{AC}$ by (2) and \cite[Theorem 4.9]{Huang Auscondition}.  Actually, each Gorenstein flat left $R$-module is a cycle of an exact complex of flat left $R$-modules. Thus (5) follows from Proposition \ref{prop: propac201}.
\hfill$\Box$

\vspace{2mm}

Let $n$ be a non-negative integer. Recall that a left $R$-module $G$ has \emph{finite Gorenstein flat dimension at most $n$} if there is an exact sequence $0\ra G_{n}\ra \cdots \ra G_{0}\ra G\ra 0$ of left $R$-modules with all $G_{i}$ Gorenstein flat (see \cite{H}). If there is no need to refer the number $n$, we shall simply say that $G$ has finite Gorenstein flat dimension. In the following, we denote by $\mathcal{A}c$ the class of finitely generated left $R$-modules satisfying the Auslander condition.
\begin{cor}\label{cor:cor1gfa} Let $R$ be Auslander-Gorenstein.  Then the following are true:
\begin{enumerate}
\item $\mathcal{AC}$ $=$ $\mathcal{GF}$.
\item $\mathcal{A}c$ $=$ $\mathcal{G}p$.
\end{enumerate}
\end{cor}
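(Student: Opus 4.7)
The proof strategy rests on Theorem \ref{thm:1.1} together with the familiar structure theory of Iwanaga-Gorenstein rings, so I would prove each of (1) and (2) by splitting into two inclusions.

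For (1), the inclusion $\mathcal{GF}\subseteq\mathcal{AC}$ is immediate: since $R$ is Auslander-Gorenstein, $_RR$ satisfies the Auslander condition, and the equivalence (2)$\Leftrightarrow$(5) of Theorem \ref{thm:1.1} gives the claim. For the reverse inclusion, set $n=\id_R R=\id_{R^{op}}R$, which is finite by hypothesis, and record two standard consequences of the Iwanaga-Gorenstein property: every left $R$-module $M$ satisfies $\Gfd_R M\leqslant n$, and every injective right $R$-module $I$ has $\fd_{R^{op}} I\leqslant n$. Given $M\in\mathcal{AC}$, I would perform a dimension-shift along the minimal injective coresolution, using the short exact sequences $0\to K^i\to E^i(M)\to K^{i+1}\to 0$ with $K^0=M$. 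The Auslander bound $\fd_R E^i(M)\leqslant i$ together with the long exact $\Tor$-sequence (applied with any injective right $R$-module $I$) yields, by induction on $s$, an isomorphism
\[
\Tor_j^R(I,M)\;\cong\;\Tor_{j+s}^R(I,K^s)
\]
valid for every $j\geqslant 1$ and every $s\geqslant 0$. Choosing $s$ with $j+s>n$ forces the right-hand side to vanish. Combined with $\Gfd_R M\leqslant n$ and the standard $\Tor$-characterization of Gorenstein flat dimension over Noetherian rings (see \cite{H}), this yields $M\in\mathcal{GF}$.

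For (2), $\mathcal{G}p\subseteq\mathcal{A}c$ is immediate from Theorem \ref{thm:1.1}(3) together with the finitely generated hypothesis. Conversely, if $M\in\mathcal{A}c$, then $M$ is finitely generated, and by (1) it is Gorenstein flat. Over an Iwanaga-Gorenstein ring, a finitely generated Gorenstein flat module is Gorenstein projective (classical; see \cite{H}), so $M\in\mathcal{G}p$.

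The main obstacle is the iterated dimension-shift in (1): each individual step $\Tor_{j+s}^R(I,K^s)\cong\Tor_{j+s+1}^R(I,K^{s+1})$ requires the index inequality $j+s\geqslant s+1$, that is $j\geqslant 1$, and simultaneously the vanishings $\Tor_{j+s}^R(I,E^s(M))=0=\Tor_{j+s+1}^R(I,E^s(M))$ supplied by the Auslander bound $\fd_R E^s(M)\leqslant s$. Once the iterated isomorphism is in place, the two Iwanaga-Gorenstein inputs and Holm's $\Tor$-characterization of $\Gfd$ do the rest, and (2) reduces to classical material.
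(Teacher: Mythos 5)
Your proposal is correct and follows essentially the same route as the paper: the inclusion $\mathcal{GF}\subseteq\mathcal{AC}$ via Theorem \ref{thm:1.1}, the reverse inclusion by truncating the minimal injective coresolution at $n=\id_R R$, invoking the finiteness of Gorenstein flat dimension over a Gorenstein ring and Holm's $\Tor$-characterization, and dimension shifting (the paper shifts from the $n$-th cokernel back down to $M$, you shift from $M$ upward, which is the same computation); part (2) is handled in both cases by the classical equivalence of finitely generated Gorenstein projective and Gorenstein flat over a Noetherian (Gorenstein) ring.
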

\begin{proof} (1) By Theorem \ref{thm:1.1}, $\mathcal{GF}$ $\subseteq$ $\mathcal{AC}$. Next we check $\mathcal{GF}$ $\supseteq$ $\mathcal{AC}$. Suppose that $M$ is  a left $R$-module in $\mathcal{AC}$. Since $R$ is Gorenstein, the injective dimension of $R$ as a left $R$-module is finite. Let $n$ be the injective dimension. Since $M$ is in $\mathcal{AC}$, there is an exact sequence $0\ra M\ra E^0(M)\ra \cdots \ra E^{n-1}(M)\ra L\ra 0$ of left $R$-modules such that fd$_R E^i(M)$ $\leqslant$ $i$ for all $i$ $\geqslant$ $0$. By \cite[Theorem 12.3.1]{EJ}, the Gorenstein flat dimension of $L$ is at most $n$. It follows from \cite[Theorem 3.14]{H} that $\Tor_{n+k}^R(E,L)$ $=$ $0$ for each injective right $R$-module $E$ and all $k$ $\geqslant$ $1$. Using dimension shifting, one can check that $\Tor_{k}^R(E,M)$ $=$ $0$ for each injective right $R$-module $E$ and all $k$ $\geqslant$ $1$. Note that the Gorenstein flat dimension of $M$ is finite by \cite[Theorem 12.3.1]{EJ}. Hence $M$ is Gorenstein flat by \cite[Theorem 3.14]{H}.

(2) By \cite[Corollary 10.3.11]{EJ},  a finitely generated left $R$-module is Gorenstein projective if and only if it is Gorenstein flat. By (1), we obtain the desired result.
\end{proof}

\begin{rem} \label{rem: remAgp1} {It follows from \cite{Bass1963} that a commutative Gorenstein ring is Auslander-Gorenstein. Thus, $\mathcal{AC}$ $=$ $\mathcal{GF}$ and  $\mathcal{A}c$ $=$ $\mathcal{G}p$ over any commutative Gorenstein ring $R$.}
\end{rem}

Let $\cdots \ra P_i\xrightarrow{f_i}\cdots\ra P_0\xrightarrow{f_0} M\ra0$ be a projective resolution of a left $R$-module $M$. The module $\im{(f_{i})}$ is called an \emph{$i$-th syzygy} of $M$, denoted by $\Omega^{i}M$ ($\Omega^{0}M$ $=$ $M$).

Let $n$ be a non-negative integer. The \emph{Gorenstein projective dimension}, $\Gpd_{R}G$, of a left $R$-module $G$ is defined by declaring that $\Gpd_{R}G$ $\leqslant$ $n$ if, and only if there is an exact sequence $0\ra G_{n}\ra \cdots \ra G_{0}\ra G\ra 0$ of left $R$-modules with all $G_{i}$ Gorenstein projective (see \cite{H}). We will say that $G$ has finite Gorenstein projective dimension if there is no need to refer the number $n$.

Note that $\Gpd_{R}M$ $\leqslant n$ if and only if $\Omega^n(M)$ is Gorenstein projective (see \cite[Proposition 2.7]{H}). Moreover, if the projective dimension of $M$ is finite, then Gpd$_R M$ $=$ pd$_R M$ (see \cite[Proposition 2.27]{H}).

\begin{cor} \label{cor: corarg1} Let $R$ be an  Artinian algebra. Then the following are equivalent:
\begin{enumerate}
\item $R$ is Auslander-Gorenstein.
\item $\mathcal{GP}$ $=$ $\mathcal{AC}$.
\item $\mathcal{G}p$ $=$ $\mathcal{A}c$.
\end{enumerate}
\end{cor}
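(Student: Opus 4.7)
The plan is to close the cycle $(1)\Rightarrow(2)\Rightarrow(3)\Rightarrow(1)$, leveraging Corollary~\ref{cor:cor1gfa} for the first two implications and addressing $(3)\Rightarrow(1)$ as the main obstacle; the latter is essentially a special case of the Auslander-Reiten conjecture in this restricted setting.

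For $(1)\Rightarrow(2)$, Corollary~\ref{cor:cor1gfa}(1) already gives $\mathcal{AC}=\mathcal{GF}$ under the Auslander-Gorenstein hypothesis, and it remains to establish $\mathcal{GF}=\mathcal{GP}$ over the Gorenstein Artinian algebra $R$. Since an Artinian algebra is left and right perfect, every flat module is projective, so a complete flat resolution $\mathbf{F}$ of a Gorenstein flat module is already a complex of projectives; the Nakayama $k$-duality $D=\Hom_{k}(-,k)$ then converts the tensor-exactness against injective right modules (defining Gorenstein flatness) into the Hom-exactness against projective left modules (defining Gorenstein projectivity), via the identification $\Hom_{R}(-,Q)\cong D(-\otimes_{R}DQ)$ for finitely generated projective $Q$. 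For $(2)\Rightarrow(3)$, I would simply intersect both classes with $R$-mod: $\mathcal{A}c=\mathcal{AC}\cap R$-mod is by definition, and $\mathcal{G}p=\mathcal{GP}\cap R$-mod is a standard fact over a Noetherian ring, where a finitely generated Gorenstein projective module admits a totally acyclic complex of finitely generated projectives.

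The heart of the proof is $(3)\Rightarrow(1)$. The Auslander condition on $R$ follows easily: $R\in\mathcal{G}p=\mathcal{A}c$ forces $_{R}R\in\mathcal{AC}$, and Auslander's left-right symmetry theorem for left and right Noetherian rings lifts this to the full Auslander condition on $R$. The obstacle is to derive finite self-injective dimension. My strategy is to show that every finitely generated left $R$-module has finite Gorenstein projective dimension; over an Artin algebra, with finitely many simple modules, this yields a uniform bound and forces $R$ to be Gorenstein by the standard characterization (the supremum of $\Gpd$ over simples bounds the supremum over all finitely generated modules). Under (3), finiteness of $\Gpd_{R}M$ is equivalent to producing some integer $i$ with $\Omega^{i}M\in\mathcal{A}c=\mathcal{G}p$, and the delicate technical step is constructing such an $i$; this uses the resolving structure of $\mathcal{AC}$ (Theorem~\ref{thm:1.1}), the inclusion of all projective modules in $\mathcal{AC}$ via \cite[Theorem~4.9]{Huang Auscondition}, and the uniform bounds $\fd_{R}E^{j}(R)\leqslant j$ inherited from the Auslander condition on $_{R}R$ itself.
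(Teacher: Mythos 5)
Your implications (1)$\Rightarrow$(2) and (2)$\Rightarrow$(3) are fine and agree in substance with the paper, which likewise reduces (1)$\Rightarrow$(2) to Corollary \ref{cor:cor1gfa} together with the identity $\mathcal{GP}=\mathcal{GF}$ over an Artinian algebra. The problem is (3)$\Rightarrow$(1), where your proposal stops exactly where the real work begins. You reduce the claim to: ``for every finitely generated $M$ there is an $i$ with $\Omega^{i}M\in\mathcal{A}c$,'' and call this the delicate technical step, but you give no construction, and none of the tools you cite produces one. The resolving property of $\mathcal{AC}$ only closes the class under kernels of epimorphisms whose \emph{cokernel already lies in} $\mathcal{AC}$; in the syzygy sequence $0\to\Omega^{i}M\to P_{i-1}\to\Omega^{i-1}M\to0$ the right-hand term is not known to be in $\mathcal{AC}$, so nothing forces any syzygy of an arbitrary $M$ (even a simple module) into $\mathcal{AC}$. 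Moreover, ``every finitely generated module has finite Gorenstein projective dimension'' is, over an Artin algebra, equivalent to $R$ being Gorenstein, so this intermediate claim is as strong as the conclusion and cannot be waved through.

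The paper's proof avoids arbitrary modules entirely and applies the machinery only to the cocycles $L_i=\ker(E^{i}(R)\to E^{i+1}(R))$ of the minimal injective coresolution of $_RR$. Since an Artinian algebra has only finitely many indecomposable finitely generated injectives and each $E^{i}(R)$ has finite flat $=$ projective dimension by the Auslander condition on $_RR$, there is a uniform bound $\pd_{R}E^{i}(R)\leqslant n$. Splicing $\cdots\to R\oplus R\to R\oplus R\to E^{0}(R)\to E^{1}(R)\to\cdots$ and taking $n$-th syzygies via the Horseshoe Lemma gives an exact complex all of whose terms are finitely generated projective or of the form $\Omega^{n}(E^{i}(R))$, which are projective, hence all terms lie in $\mathcal{A}c$; Proposition \ref{prop: propac201} then puts its cycles $\Omega^{n}(L_i)$ in $\mathcal{A}c=\mathcal{G}p$, so $\Gpd_{R}L_i\leqslant n$ for all $i$. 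Then $\Ext^{n+1}_{R}(L_{n+1},R)=0$, dimension shifting gives $\Ext^{1}_{R}(L_{n+1},L_{n})=0$, the sequence $0\to L_{n}\to E^{n}(R)\to L_{n+1}\to0$ splits, and $\id({}_RR)\leqslant n$, whence $R$ is Auslander--Gorenstein by \cite[Corollary 5.5(b)]{ARk-G}. This restriction of the argument to the injective coresolution of $R$ itself is the idea your proposal is missing; as written, your route to (3)$\Rightarrow$(1) does not go through.
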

\begin{proof}(1) $\Rightarrow$ (2) follows from Corollary \ref{cor:cor1gfa} since $\mathcal{GP}$ $=$ $\mathcal{GF}$ over an  Artinian algebra.

(2) $\Rightarrow$ (3) is clear.

(3) $\Rightarrow$ (1). Note that $R$ is in $\mathcal{A}c$ as a left $R$-module by (3). Let $$0\ra R\ra E^0(R)\ra \cdots \ra E^i(R)\ra E^{i+1}(R)\ra \cdots$$ be a minimal injective coresolution of $R$.  Then there is a non-negative integer $n$ such that pd$_R E^i(R)$ $\leqslant$ $n$ for all $i$ $\geqslant$ $0$ by noting that an  Artinian algebra has only finite number of non-isomorphic indecomposable finitely generated injective left $R$-modules. Using the minimal injective coresolution of $R$, we can obtain another exact complex $$\cdots\ra R\oplus R \ra R\oplus R \ra E^0(R)\ra \cdots \ra E^i(R)\ra E^{i+1}(R)\ra \cdots.$$ Using the Horseshoe Lemma, we obtain an exact complex $$\mathbf{X}:\cdots\ra\Omega^n(R\oplus R)\ra\Omega^n(R\oplus R)\ra \Omega^n(E^0(R))\ra \cdots \ra \Omega^n(E^i(R)) \ra \Omega^n(E^{i+1}(R))\ra\cdots $$ of left $R$-modules with
\begin{equation}\label{3.3}\ker(\Omega^n(E^i(R)) \ra \Omega^n(E^{i+1}(R))) \cong \Omega^n(\ker(E^i(R) \ra E^{i+1}(R)))
\end{equation}
for all $i$ $\geqslant$ $0$. By the proof above,  $\Omega^n(E^i(R))$ is projective for all $i\geqslant0$. Note that all syzygies in the exact complex $\mathbf{X}$ can be taken to be finitely generated. Thus each term in the exact complex  $\mathbf{X}$ is in $\mathcal{A}c$ by (3).  Applying Proposition \ref{prop: propac201}, ker$(\Omega^n(E^i(R)) \ra \Omega^n(E^{i+1}(R)))$ is in $\mathcal{A}c$ for all $i$ $\geqslant$ $0$. It follows from (3) that  ker$(\Omega^n(E^i(R)) \ra \Omega^n(E^{i+1}(R)))$ is Gorenstein projective for all $i$ $\geqslant$ $0$. Using the isomorphism \eqref{3.3}, one can obtain that ker$(E^i(R)\ra E^{i+1}(R))$ has finite Gorenstein projective dimension at most $n$ for all $i$ $\geqslant$ $0$. Let $L_i$ $=$ ker$(E^i(R)\ra E^{i+1}(R))$ for all $i$ $\geqslant$ $0$. Then Gpd$_R L_{n+1}$ $\leqslant$ $n$, and so  Ext$_R^{n+1}(L_{n+1}, R)$ $=$ $0$ by \cite[Theorem 2.20]{H}.  Using dimension shifting, one can check that Ext$_R^{1}(L_{n+1}, L_n)$ $=$ $0$. It follows that the exact sequence $0\ra L_n\ra E^{n}(R)\ra L_{n+1} \ra 0$ splits. Then $L_{n}$ is injective, and therefore $R$ has finite injective dimension at most $n$ as a left $R$-module.  So $R$ is Auslander-Gorenstein by \cite[Corollary 5.5(b)]{ARk-G}.
\end{proof}

\begin{cor}\label{cor:corACfres} If $R$ is a left Noetherian ring such that $_{R}R$ satisfies the Auslander condition, then $\mathcal{AC}^{<\infty}$ is a resolving subcategory of {\rm $R$-Mod}.
\end{cor}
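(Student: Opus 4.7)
The plan is to reduce the corollary to a general closure principle. First I would invoke the implication (2)$\Rightarrow$(1) of Theorem \ref{thm:1.1} to conclude that, under the stated hypotheses on $R$, the class $\mathcal{AC}$ is a resolving subcategory of $R$-Mod. With $\mathcal{AC}$ known to be resolving, the corollary becomes a direct application of Huang's \cite[Theorem 3.2]{HuangHdr}, which asserts that for any resolving subcategory $\mathcal{X}$ of $R$-Mod the associated class $\mathcal{X}^{<\infty}$ of modules with finite $\mathcal{X}$-dimension is again resolving. Setting $\mathcal{X}=\mathcal{AC}$ then yields the statement.

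If one wished to make the argument self-contained, the standard checks would go as follows: projective modules have $\mathcal{AC}$-dimension zero, so $\mathcal{AC}^{<\infty}$ contains all projectives; closure under extensions follows from the Horseshoe Lemma applied to finite $\mathcal{AC}$-resolutions of the outer terms, together with the fact that $\mathcal{AC}$ is closed under extensions (which we have just obtained from Theorem \ref{thm:1.1}); and closure under kernels of epimorphisms reduces, after a standard dimension-shift, to the resolving property of $\mathcal{AC}$.

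The one genuinely subtle point, which I expect to be the main obstacle, is closure of $\mathcal{AC}^{<\infty}$ under direct summands: given $M\oplus N\in\mathcal{AC}^{<\infty}$ one must produce finite $\mathcal{AC}$-resolutions of $M$ and of $N$ separately. In Huang's proof this is handled by building a common $\mathcal{AC}$-resolution of $M\oplus N$, truncating to obtain a syzygy $\Omega^m(M\oplus N)$ in $\mathcal{AC}$, and then using that $\mathcal{AC}$ is resolving (hence closed under direct summands) to split off $\Omega^m M$ as an object of $\mathcal{AC}$; a dimension-shift back then yields $\pd$-type control on $M$ relative to $\mathcal{AC}$. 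This is the step where the assumption on $_RR$ is essentially used, and once Huang's theorem is cited nothing further is needed.
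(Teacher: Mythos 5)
Your proof is correct and follows the same skeleton as the paper's (Theorem \ref{thm:1.1} to make $\mathcal{AC}$ resolving, then \cite[Theorem 3.2]{HuangHdr} for $\mathcal{AC}^{<\infty}$), but be careful with the citation: as the paper uses it, Huang's theorem yields only that $\mathcal{AC}^{<\infty}$ contains the projectives and is closed under extensions and kernels of epimorphisms --- it does \emph{not} deliver closure under direct summands, so your first paragraph overstates what the reference provides and, taken alone, would leave a gap. You do correctly identify direct summands as the one remaining issue, and your proposed fix is sound: since $\mathcal{AC}$ is resolving, the relative dimension is detected on syzygies of a projective resolution, so $M\oplus N\in\mathcal{AC}^{\leqslant m}$ gives $\Omega^m M\oplus\Omega^m N\cong\Omega^m(M\oplus N)\in\mathcal{AC}$, and closure of $\mathcal{AC}$ under summands splits off $\Omega^m M\in\mathcal{AC}$, whence $M\in\mathcal{AC}^{\leqslant m}$. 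The paper argues differently at exactly this point: it writes $\mathcal{AC}^{<\infty}=\bigcup_{s\geqslant 0}\mathcal{AC}^{\leqslant s}$, observes each $\mathcal{AC}^{\leqslant s}$ is closed under direct sums, extensions and kernels of epimorphisms, and invokes the Eilenberg-swindle criterion \cite[Proposition 1.4]{H} to get closure under summands. Your syzygy route is the more classical Auslander--Bridger-style argument and makes the use of the resolving property of $\mathcal{AC}$ explicit, but it silently relies on the comparison lemma identifying $\mathcal{AC}$-dimension with vanishing of syzygies (which itself needs $\mathcal{AC}$ closed under summands, available here); the paper's swindle avoids any syzygy comparison at the cost of checking closure under direct sums. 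One last small point: the hypothesis on $_RR$ is consumed already in making $\mathcal{AC}$ resolving via Theorem \ref{thm:1.1}, not specifically in the direct-summand step.
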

\begin{proof}
 By Theorem \ref{thm:1.1}, $\mathcal{AC}$ is a resolving subcategory of $R$-Mod. To prove that $\mathcal{AC}^{<\infty}$ is a resolving subcategory of $R$-Mod,  we only need to check that $\mathcal{AC}^{<\infty}$ is closed under direct summands by \cite[Theorem 3.2]{HuangHdr}. Note that $\mathcal{AC}^{<\infty}$ $=$ $\bigcup_{s\geqslant0} \mathcal{AC}^{\leqslant s}$, where $\mathcal{AC}^{\leqslant s}$ is the class of left $R$-modules with finite $\mathcal{AC}$-dimension at most $s$, i.e., for each $M$ in $\mathcal{AC}^{\leqslant s}$, there is an exact sequence $0\ra P_s\ra \cdots \ra P_0\ra M\ra 0$ of left $R$-modules with each $P_i$ in $\mathcal{AC}$. Applying \cite[Theorem 3.2]{HuangHdr}, we obtain that $\mathcal{AC}^{\leqslant s}$ is closed under extensions and kernels of surjective homomorphisms for all $s\geqslant0$. Since $\mathcal{AC}^{\leqslant s}$ is closed under direct sums, it follows from \cite[Proposition 1.4]{H} that $\mathcal{AC}^{\leqslant s}$ is closed under direct summands. This completes the proof.
\end{proof}
\subsection{Proof of Theorem \ref{thm:1.2}}
In the following, we will give the proof of Theorem \ref{thm:1.2}.  Recall that the \emph{left (little) finitistic dimension} of a ring $R$ is the supremum of projective dimensions of (finitely generated) left $R$-modules of finite projective dimension. It is conjectured that the left little finitistic dimension of each  Artinian algebra is finite; see \cite{Bass} and \cite[Conjectures]{ARS}. This is the finitistic dimension conjecture.

\begin{lem} \label{prop: propfd1} Let $R$ be a  ring and {\rm($\mathcal{A},\mathcal{B} $)} a tilting-like cotorsion pair. If all left $R$-modules with finite projective dimension are in $\mathcal{A}$, then the left finitistic dimension of $R$ is finite. Moreover, if $R$ is left and right Noetherian satisfying the Auslander condition, then injective dimension of $R$ as a left $R$-module is finite.

\end{lem}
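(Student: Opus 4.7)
The plan is to address the two assertions of the lemma separately. For the first, I set $r = \pd_R T$, where $T$ is the tilting module in the decomposition $\mathcal{B} = (T \oplus S)^{\perp_{\infty}}$ afforded by the tilting-like hypothesis, and aim to show that every left $R$-module $M$ with $\pd_R M < \infty$ satisfies $\pd_R M \leq r$, which immediately yields finiteness of the left finitistic dimension.

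By assumption such an $M$ lies in $\mathcal{A}$. Since $(\mathcal{A}, \mathcal{B})$ is generated by the single module $T \oplus S$, it is hereditary, so $\mathcal{A}$ is resolving and closed under syzygies; hence $N := \Omega^{r}M \in \mathcal{A}$ and still has finite projective dimension. I would then verify that $N \in \mathcal{B}$: dimension shifting gives $\Ext^{i}_{R}(T, N) \cong \Ext^{i+r}_{R}(T, M) = 0$ for all $i \geq 1$ because $i + r > r = \pd_R T$, while the standard fact that $\Ext^{i}_{R}(G, L) = 0$ whenever $G$ is Gorenstein projective and $\pd_R L < \infty$ (see \cite{H}) yields $\Ext^{i}_{R}(S, N) = 0$ for $i \geq 1$. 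Hence $N \in \mathcal{A} \cap \mathcal{B}$. The identical argument shows that every further syzygy $\Omega^{r+k} M$ lies in $\mathcal{A} \cap \mathcal{B}$, so for any projective presentation $0 \to K \to P \to N \to 0$ the syzygy $K$ also belongs to $\mathcal{A} \cap \mathcal{B}$; consequently $\Ext^{1}_{R}(N, K) = 0$, the sequence splits, and $N$ is a direct summand of the projective module $P$. Therefore $N$ is projective and $\pd_R M \leq r$.

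For the second assertion I would combine the finite finitistic dimension $f \leq r$ just established with the Auslander condition $\fd_R E^{i}(R) \leq i$ for all $i \geq 0$ (symmetric on both sides by the Fossum--Griffith--Reiten theorem recalled in the Introduction), and invoke the classical consequence for two-sided Noetherian rings satisfying the Auslander condition (see \cite{Huang Auscondition}): finite big finitistic dimension forces finite self-injective dimension, giving $\id_R R \leq f < \infty$. The main obstacle in producing a fully self-contained treatment of this second step is bridging from the flat-dimension control on the typically non-finitely-generated terms $E^{i}(R)$ to an actual bound on the injective dimension of $R$; this is accomplished via the Auslander-type grade inequality applied to finitely generated submodules of $E^{i}(R)$ together with the finiteness of the finitistic dimension, which is precisely what the classical citation encapsulates.
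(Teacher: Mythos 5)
Your second assertion is handled the same way as in the paper (the paper cites Huang--Iyama, \emph{Auslander-type conditions and cotorsion pairs}, Cor.~5.3, to pass from finite (little) finitistic dimension to finite self-injective dimension), so that part is fine. The problem is in your proof of the first assertion, where there is a genuine gap: the dimension-shifting step
$\Ext^i_R(T,\Omega^r M)\cong \Ext^{i+r}_R(T,M)$ is invalid. Shifting along a projective resolution of $M$ in the \emph{second} variable of $\Ext$ requires $\Ext^{\geqslant 1}_R(T,P)=0$ for projective $P$, and a tilting module does not satisfy this in general: already for the path algebra of $1\to 2$ and the classical tilting module $T=P_1\oplus S_1$ one has $\Ext^1_R(T,R)\neq 0$, i.e.\ $R\notin T^{\bot_\infty}$. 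For the same reason your claim that every sufficiently high syzygy $\Omega^{r+k}M$ lies in $\mathcal{B}=(T\oplus S)^{\bot_\infty}$ is false (such syzygies are eventually projective when $\pd_RM<\infty$, and projectives need not lie in $T^{\bot_\infty}$), so the splitting argument that is supposed to force $\Omega^rM$ to be projective collapses. The part of your argument concerning $S$ (namely $\Ext^{\geqslant1}_R(S,N)=0$ for $N$ of finite projective dimension) is correct, but it does not rescue the $T$-component. Note also that your justification of hereditariness is slightly off: being generated by a single module does not imply hereditary; what does is that $\mathcal{B}$ is a class of the form $X^{\bot_\infty}$ and hence closed under cosyzygies.

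The paper avoids all of this by quoting the structure theorem for tilting-like cotorsion pairs \cite[Theorem 1.1]{JwandliHuNonGwhen}: every module in $\mathcal{A}$ has Gorenstein projective dimension at most $n=\pd_RT$. Combined with the standard fact that $\Gpd_RM=\pd_RM$ whenever $\pd_RM<\infty$ \cite[Proposition 2.27]{H}, this immediately bounds the left finitistic dimension by $n$. If you want to keep an argument in the spirit of yours, you would need this theorem (or at least the identification of the kernel $\mathcal{A}\cap\mathcal{B}$ with $\Add(T\oplus S)$ and the resulting special $\mathcal{B}$-preenvelopes/$\mathcal{A}$-precovers of modules in $\mathcal{A}$); it cannot be replaced by naive syzygy-shifting against $T$.
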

\begin{proof} Let ($\mathcal{A},\mathcal{B} $) be a tilting-like cotorsion pair. Then there is a tilting left $R$-module $T$ and a strongly Gorenstein projective left $R$-module $S$ such that $\mathcal{B}$ $=$ $(T\oplus S)^{\bot_\infty}$. Let $n$ be the projective dimension of $T$.  By \cite[Theorem 1.1]{JwandliHuNonGwhen}, each left $R$-module in  $\mathcal{A}$ has finite Gorenstein projective dimension at most $n$.  Let $M$ be any left $R$-module with finite projective dimension. Then $M$ is in ${\mathcal{A}}$ by assumption. Hence the Gorenstein projective dimension of $M$ is at most $n$, and so the projective dimension of $M$ is at most $n$. It follows that the left finitistic dimension of $R$ is finite at most $n$, which implies that the left little finitistic dimension of $R$ is finite. If $R$ is left and right Noetherian satisfying the Auslander condition, then the injective dimension of $R$ as a left $R$-module is finite by \cite[Corollary 5.3]{Huang I cot}.
\end{proof}

Following \cite{H S app C P}, a full subcategory $\mathcal{C}$ of $R$-Mod has the \emph{two out of three property} if for any exact sequence $0\rightarrow A\rightarrow B\rightarrow C \rightarrow 0$  of left $R$-modules  with two terms in $\mathcal{C}$, then the third one is also in $\mathcal{C}$.

\begin{lem} \label{prop: propGIr1} Let $R$ be a left and right  Noetherian ring satisfying the Auslander condition. Then the following are equivalent:
 \begin{enumerate}
 \item $R$ is Gorenstein.
 \item {\rm(${^\bot\mathcal{GI}},\mathcal{GI}$)} is a tilting-like cotorsion pair.
 \item There is a tilting-like cotorsion pair {\rm($\mathcal{A},\mathcal{B}$)} such that all left $R$-modules with finite projective dimension are in $\mathcal{A}$ and all injective left $R$-modules are in the kernel of the cotorsion pair.
 \end{enumerate}
\end{lem}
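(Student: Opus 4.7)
The plan is to prove the cycle $(3) \Rightarrow (1) \Rightarrow (2) \Rightarrow (3)$.

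For $(3) \Rightarrow (1)$, Lemma~\ref{prop: propfd1} applies directly: the tilting-like cotorsion pair $(\mathcal{A},\mathcal{B})$ with every left $R$-module of finite projective dimension in $\mathcal{A}$, combined with the Auslander condition on $_RR$, yields that the injective dimension of $_RR$ is finite. Since $R$ is both left and right Noetherian, a classical theorem of Zaks then promotes this to finite two-sided self-injective dimension, so $R$ is Gorenstein.

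For $(1) \Rightarrow (2)$, I invoke the standard Gorenstein-homological fact that over a Gorenstein ring $R$ of self-injective dimension $n$ the pair $({}^{\bot}\mathcal{GI},\mathcal{GI})$ is a complete hereditary cotorsion pair whose left class is precisely the class of left $R$-modules of finite projective dimension. To realise it as tilting-like, I set $T:=\bigoplus_{i=0}^{n}E^{i}(R)$, assembled from a minimal injective coresolution of $R$, and take $S=0$ (trivially strongly Gorenstein projective). Each $E^{i}(R)$ has finite projective dimension over Gorenstein $R$, direct sums of injectives remain injective since $R$ is Noetherian, and the minimal injective coresolution of $R$ itself supplies the required $\Add T$-resolution of $_RR$, so $T$ is a classical tilting module. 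The identity $T^{\bot_{\infty}}=\mathcal{GI}$ then follows from the characterisation of Gorenstein injectives over a Gorenstein ring ($M\in\mathcal{GI}$ iff $\Ext^{i}_{R}(I,M)=0$ for every injective left $R$-module $I$ and all $i\geqslant 1$) combined with the fact that every injective left $R$-module is a direct summand of a direct sum of copies of $T$.

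For $(2) \Rightarrow (3)$, I take $(\mathcal{A},\mathcal{B})=({}^{\bot}\mathcal{GI},\mathcal{GI})$. The key inclusion is that every left $R$-module $M$ of finite projective dimension lies in ${}^{\bot}\mathcal{GI}$. For this, embed $G\in\mathcal{GI}$ in a totally acyclic complex of injectives $\cdots\to I^{-1}\to I^{0}\to I^{1}\to\cdots$ with $G=\ker(I^{0}\to I^{1})$, and define the left syzygies $L_{k}:=\ker(I^{-k}\to I^{-k+1})$ for $k\geqslant 0$ (so $L_{0}=G$). Each $L_{k}$ lies in $\mathcal{GI}$ by shifting, and the short exact sequences $0\to L_{k+1}\to I^{-k-1}\to L_{k}\to 0$, together with $\Ext^{i}_{R}(M,I^{-k-1})=0$ for $i\geqslant 1$, yield the dimension shift $\Ext^{j}_{R}(M,G)\cong\Ext^{j+k}_{R}(M,L_{k})$ for every $j\geqslant 1$ and $k\geqslant 0$; taking $k=\pd_{R}M$ forces the right-hand side to vanish. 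Injective modules are trivially in $\mathcal{GI}$, and having projective dimension zero they lie in ${}^{\bot}\mathcal{GI}$ by the previous sentence, placing $\mathcal{I}$ in the kernel of the cotorsion pair.

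The main technical obstacle I foresee is the verification $T^{\bot_{\infty}}=\mathcal{GI}$ in $(1) \Rightarrow (2)$. The inclusion $\mathcal{GI}\subseteq T^{\bot_{\infty}}$ follows from the same dimension-shift argument applied to each injective summand of $T$ (each of finite projective dimension over Gorenstein $R$), but the reverse inclusion rests on the structural fact that over a Noetherian Gorenstein ring every indecomposable injective left $R$-module appears (up to isomorphism) as a direct summand of some $E^{i}(R)$, which is needed to upgrade Ext-vanishing against $T$ to Ext-vanishing against every injective left $R$-module.
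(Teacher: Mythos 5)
There is a genuine gap in your step $(3)\Rightarrow(1)$. Lemma~\ref{prop: propfd1} does give $\id{}_RR<\infty$, but Zaks' theorem does not do what you claim: it says that \emph{if both} one-sided self-injective dimensions are finite \emph{then} they are equal; it does not promote finiteness on one side to finiteness on the other (that symmetry is in general an open problem). Tellingly, your argument never uses the hypothesis in (3) that all injective left $R$-modules lie in the kernel of the cotorsion pair --- and that hypothesis is exactly what the paper uses to close this gap: by \cite[Theorem 1.1]{JwandliHuNonGwhen} every module in the kernel of a tilting-like cotorsion pair has finite projective dimension, so the injective left module $\Hom_\mathbb{Z}(R,\mathbb{Q}/\mathbb{Z})$ has finite projective (hence flat) dimension, which forces $\id R_R<\infty$. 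Without some replacement for this step your cycle of implications does not close.

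Your $(1)\Rightarrow(2)$ also rests on an unproved assertion that you yourself flag: that over an arbitrary left and right Noetherian Gorenstein ring every indecomposable injective is a direct summand of some $E^i(R)$. This is true for commutative Noetherian Gorenstein rings (Bass) and for Artin algebras (a socle/$\Ext(S,R)$ argument), but the lemma is stated for general two-sided Noetherian rings, where I see no justification; without it you cannot upgrade $T^{\bot_\infty}$-vanishing to vanishing against all injectives, so $T^{\bot_\infty}=\mathcal{GI}$ is not established. The paper sidesteps the construction of an explicit tilting module entirely: it quotes \cite[Theorem 5.6]{JJarxiv} to get that $({}^\bot\mathcal{GI},\mathcal{GI})$ is a complete hereditary cotorsion pair generated by a set, notes that over a Gorenstein ring every module in ${}^\bot\mathcal{GI}$ has Gorenstein projective dimension at most $n$, and then applies the abstract criterion \cite[Theorem 1.1]{JwandliHuNonGwhen} to conclude the pair is tilting-like. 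Finally, in $(2)\Rightarrow(3)$ your dimension-shifting proof that finite-projective-dimension modules lie in ${}^\bot\mathcal{GI}$ is correct (and a nice elementary alternative to the paper's appeal to the two-out-of-three property of ${}^\bot\mathcal{GI}$), but your justification that injectives lie in ${}^\bot\mathcal{GI}$ is wrong as written --- injective modules do not have projective dimension zero; the correct reason is the exactness of $\Hom_R(E,\mathbf{I})$ in the definition of Gorenstein injectivity, which makes every short exact sequence $0\to G'\to I^0\to G\to 0$ of cosyzygies $\Hom_R(E,-)$-exact for injective $E$.
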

\begin{proof} (1) $\Rightarrow$ (2). By \cite[Theorem 5.6]{JJarxiv}, (${^\bot\mathcal{GI}},\mathcal{GI}$) is a complete hereditary cotorsion pair generated by a set of left $R$-modules. Then there is a left $R$-module $M$ in ${^\bot\mathcal{GI}}$ such that $M^\bot$ $=$ $\mathcal{GI}$. Note that $M^{\bot_{\infty}}$ $=$ $M^\bot$ since (${^\bot\mathcal{GI}},\mathcal{GI}$) is a hereditary cotorsion pair.  Then  $M^{\bot_\infty}$ $=$ $\mathcal{GI}$. One can check that the kernel of (${^\bot\mathcal{GI}},\mathcal{GI}$) is the class $\mathcal{I}$ of injective left $R$-modules. Since $R$ is Gorenstein, each injective left $R$-module has finite projective dimension at most $n$ by [15, Theorem 9.1.10], where $n$ is the injective dimension of the left $R$-module $R$. Note that $\mathcal{I}$ is closed under direct sums since $R$ is a left Noetherian
ring. By \cite[Theorem 12.3.1]{EJ}, each left $R$-module in ${^\bot\mathcal{GI}}$ has finite Gorenstein projective dimension at most $n$. Thus (${^\bot\mathcal{GI}},\mathcal{GI}$) is a tilting-like cotorsion pair by \cite[Theorem 1.1]{JwandliHuNonGwhen}.

(2) $\Rightarrow$ (3). It is clear that all projective left $R$-modules are in ${^\bot\mathcal{GI}}$. By \cite[Lemma 5.4]{JJarxiv},  ${^\bot\mathcal{GI}}$ has two out of three property. Thus all left $R$-modules with finite projective dimension are in ${^\bot\mathcal{GI}}$. By assumption, (${^\bot\mathcal{GI}},\mathcal{GI}$) is a tilting-like cotorsion pair. Note that the kernel of (${^\bot\mathcal{GI}},\mathcal{GI}$) is the class of injective left $R$-modules. This yields that (${^\bot\mathcal{GI}},\mathcal{GI}$) is a desired cotorsion pair satisfying the conditions in (3).

(3) $\Rightarrow$ (1). By Lemma \ref{prop: propfd1}, $R$ has finite injective dimension as a left  $R$-module.  Since ($\mathcal{A},\mathcal{B}$) is a tilting-like cotorsion pair, each left $R$-module in the kernel has finite projective dimension by \cite[Theorem 1.1]{JwandliHuNonGwhen}. Note that the left $R$-module Hom$_\mathbb{Z}(R,\mathbb{Q}/\mathbb{Z})$ is injective, where $\mathbb{Q}$
is the additive group of rational numbers and $\mathbb{Z}$ is the additive group of integers. Then  Hom$_\mathbb{Z}(R,\mathbb{Q}/\mathbb{Z})$ is in the kernel of the cotorsion pair ($\mathcal{A},\mathcal{B} $) by assumption. Hence Hom$_\mathbb{Z}(R,\mathbb{Q}/\mathbb{Z})$ has finite projective dimension. It follows that $R$ has finite injective dimension as a right $R$-module. Thus $R$ is Gorenstein.
\end{proof}

We are now ready to prove Theorem \ref{thm:1.2}.

{\bf Proof of Theorem \ref{thm:1.2}.} (1)
Assume that $R$ is Gorenstein. Let  $\mathcal{GF}{^{< \infty}}$ be the class of left $R$-modules with finite Gorenstein flat dimension. Then  $\mathcal{AC}^{< \infty}$  $=$ $\mathcal{GF}^{<  \infty}$ by Corollary \ref{cor:cor1gfa}.  By \cite[Theorem 12.3.1]{EJ}, $\mathcal{GF}^{<  \infty}$ $=$ $R$-Mod, and so  $\mathcal{AC}^{< \infty}$ $=$ $R$-Mod. Let $n$ be the injective dimension of the left $R$-module $R$ and  $\mathcal{GP}{^{\leqslant n}}$ the class of left $R$-modules with finite Gorenstein projective dimension at most $n$.
 Then $R$-Mod $=$ $\mathcal{GP}^{\leqslant n}$ by \cite[Theorem 12.3.1]{EJ}. It is well known that  ($R$-Mod, $\mathcal{I}$) is a complete  hereditary cotorsion pair and there is a left $R$-module $M$ such that $M^{\bot_\infty}$ $=$ $\mathcal{I}$. By \cite[Proposition 4.8 and Theorem 1.1]{JwandliHuNonGwhen}, (${\mathcal{AC}^{< \infty}},\mathcal{I}$) is a tilting-like cotorsion pair.

Conversely, assume that (${\mathcal{AC}^{<  \infty}},\mathcal{I}$) is a tilting-like cotorsion pair. Note that all left $R$-modules with finite projective dimension are in ${\mathcal{AC}^{<  \infty}}$ and the kernel of (${\mathcal{AC}^{<  \infty}},\mathcal{I}$) is $\mathcal{I}$. Then $R$ is Gorenstein by Lemma \ref{prop: propGIr1}.

(2) (a) $\Rightarrow$ (b). Note that (${\mathcal{AC}^{<  \infty}},\mathcal{I}$) is a tilting-like cotorsion pair by (1).  Since $R$ is Auslander-regular, one can check that there is a non-negative integer $n$ such that each left $R$-module has finite projective dimension at most $n$. So (${\mathcal{AC}^{<  \infty}},\mathcal{I}$) is a tilting cotorsion pair by \cite[Corollary 13.20]{GT}.

(b) $\Rightarrow$ (c). To prove (c), it suffices to show $\mathcal{GI}$ $=$ $\mathcal{I}$. Since (${\mathcal{AC}^{<  \infty}},\mathcal{I}$) is a tilting cotorsion pair, it follows from \cite[Lemma 13.10]{GT} that each left $R$-module has finite projective dimension. By \cite[Corollary 2.9]{GDWGor}, $\mathcal{GI}$ $=$ $\mathcal{I}$, as desired.

(c) $\Rightarrow$ (d) is clear.

(d) $\Rightarrow$ (a). By assumption, (${\mathcal{AC}^{<  \infty}},\mathcal{GI}$) is a tilting-like cotorsion pair. Applying  Lemma \ref{prop: propGIr1}, $R$ is Gorenstein.  Then ${\mathcal{AC}^{<  \infty}}$ $=$ $R$-Mod by (1). Thus each left $R$-module has finite projective dimension by  \cite[Remark 11.2.3]{EJ}, and therefore $R$ is Auslander-regular.

(3) The ``only if" part follows from  (1). For the ``if" part, by Theorem \ref{thm:1.1}, all left $R$-modules with finite projective dimension are in ${\mathcal{AC}^{<  \infty}}$. By Lemma \ref{prop: propfd1}, $R$ has finite injective dimension as a left $R$-module. So $R$ is Gorenstein by \cite[Corollary 5.5(b)]{ARk-G}.

(4) The ``only if" part follows from  (2). For the ``if" part, assume that (${\mathcal{AC}^{<  \infty}},(\mathcal{AC}^{< \infty})^\bot$) is a tilting cotorsion pair, it is also a tilting-like cotorsion pair. Since $R$ is Gorenstein by (3), it follows from \cite[Theorem 12.3.1]{EJ} that each left $R$-module has finite Gorenstein projective dimension. Note that each left $R$-module in ${\mathcal{AC}^{<  \infty}}$ has finite projective dimension by \cite[Lemma 13.10]{GT} and all Gorenstein projective left $R$-modules are in $\mathcal{AC}$ by Theorem \ref{thm:1.1}. It follows that each Gorenstein projective left $R$-module has finite projective dimension, and so each Gorenstein projective left $R$-module is projective. Thus each left $R$-module has finite projective dimension. So $R$ is Auslander-regular.
\hfill$\Box$

\bigskip \centerline {\bf ACKNOWLEDGEMENTS}
\bigskip
This research was partially supported by the Natural Science Foundation of the Jiangsu Higher Education Institutions of China (19KJB110012, 21KJB110003), NSFC (12201264, 12171206),  Jinling Institute of Technology of China (jit-b-201638, jit-fhxm-201707), the Natural Science Foundation of Jiangsu Province (BK20211358) and Jiangsu 333
Project. The authors thank Professor Zhaoyong
Huang from Nanjing University for helpful discussions on parts of this paper. Also, the authors appreciate the referee for reading the paper carefully and for
many suggestions on mathematics and English expressions.

\bigskip

\end{document}